\providecommand{\tabularnewline}{\\}
\theoremstyle{plain}
\newtheorem{thm}{\protect\theoremname}[section]
  \theoremstyle{definition}
  \newtheorem{defn}[thm]{\protect\definitionname}
  \theoremstyle{remark}
  \newtheorem{rem}[thm]{\protect\remarkname}
  \theoremstyle{plain}
  \newtheorem{cor}[thm]{\protect\corollaryname}
\newcommand{\D}{\mathrm{D}}
\newcommand{\dd}{\mathrm{d}}
\date{}
  \providecommand{\corollaryname}{Corollary}
  \providecommand{\definitionname}{Definition}
  \providecommand{\remarkname}{Remark}
\providecommand{\theoremname}{Theorem}
\begin{document}

\title{Order-distance and other metric-like functions on jointly distributed
random variables}

\author{Ehtibar N. Dzhafarov\footnote{Corresponding author. Purdue University, USA, ehtibar@purdue.edu. Supported by AFOSR grant FA9550-09-1-0252.}
\text{ }and Janne V. Kujala\footnote{University of Jyväskylä, Finland, janne.v.kujala@jyu.fi. Supported by Academy of Finland grant 121855.}}
\maketitle
\begin{abstract}
We construct a class of real-valued nonnegative binary functions on
a set of jointly distributed random variables, which satisfy the triangle
inequality and vanish at identical arguments (pseudo-quasi-metrics).
These functions are useful in dealing with the problem of selective
probabilistic causality encountered in behavioral sciences and in
quantum physics. The problem reduces to that of ascertaining the existence
of a joint distribution for a set of variables with known distributions
of certain subsets of this set. Any violation of the triangle inequality
or its consequences by one of our functions when applied to such a
set rules out the existence of this joint distribution. We focus on
an especially versatile and widely applicable pseudo-quasi-metric
called an order-distance and its special case called a classification
distance. 

\textsc{Keywords:} Bell-CHSH-Fine inequalities, Einstein-Podolsky-Rosen paradigm, probabilistic causality in behavioral sciences, pseudo-quasi-metrics on random variables, quantum entanglement, selective influences.

2010 Mathematics Subject Classification: Primary 60B99, Secondary 81Q99, 91E45.

\end{abstract}

We show how certain metric-like functions on jointly distributed random
variables (\emph{pseudo-quasi-metrics} introduced in Section \ref{sec:Order-p.q.-metrics})
can be used in dealing with the problem of selective probabilistic
causality (introduced in Section \ref{sec:Selective-probabilistic-causalit}),
illustrating this on examples taken from behavioral sciences and quantum
physics (Section \ref{sec:An-application}). Although most of Section
\ref{sec:Selective-probabilistic-causalit} applies to arbitrary pseudo-quasi-metrics
on jointly distributed random variables, we single out one, termed
\emph{order-distance}, which is especially useful due to its versatility.
We discuss examples of other pseudo-quasi-metrics and rules for their
construction in Section \ref{sec:Concluding-remarks}.

\section{\label{sec:Order-p.q.-metrics}Order p.q.-metrics}

Random variables in this paper are understood in the broadest sense,
as measurable functions $X:V_{s}\rightarrow V$, no restrictions being
imposed on the sample spaces $\left(V_{s},\Sigma_{s},\mu_{s}\right)$
and the induced probability spaces, $\left(V,\Sigma,\mu\right)$,
with the usual meaning of the terms (sets of values $V_{s},V$, sigma-algebras
$\Sigma_{s},\Sigma$, and probability measures $\mu_{s},\mu$). In
particular, any set $X$ of jointly distributed random variables (functions
on the same sample space) is a random variable, and its induced probability
space (or, simply, \emph{distribution}) $\overline{X}=\left(V,\Sigma,\mu\right)$
is referred to as the joint distribution of its elements.

Given a class of random variables $\mathbb{\mathscr{X}}$, not necessarily
jointly distributed, let $\mathscr{X}^{*}$ be the class of distributions
$\overline{X}$ for all $X\in\mathscr{X}$. For any class function
$f^{*}:\mathscr{\mathscr{X}^{*}}\rightarrow\mathbb{R}$ (reals), the
function $f:\mathbb{\mathscr{X}}\rightarrow\mathbb{R}$ defined by
$f\left(X\right)=f^{*}\left(\overline{X}\right)$ is called \emph{observable}
(as it does not depend on sample spaces, typically unobservable).
We will conveniently confuse $f$ and $f^{*}$ for observable functions,
so that if $f$ is defined on $\mathscr{X}$, then $f\left(Y\right)$,
identified with $f^{*}\left(\overline{Y}\right)$, is also defined
for any $Y\not\in\mathscr{X}$ with $\overline{Y}\in\mathbb{\mathscr{X}}^{*}$.
(This convention is used in Section \ref{sec:Selective-probabilistic-causalit},
when we apply a function defined on a set of random variables $H$
to different but identically distributed sets of $A\textnormal{-}$variables.)

For an arbitrary nonempty set $\Omega$, let $H=\left\{ H_{\omega}:\omega\in\Omega\right\} $
be a indexed set of jointly distributed random variables $H_{\omega}$
with distributions $\overline{H}_{\omega}=\left(V_{\omega},\Sigma_{\omega},\mu_{\omega}\right)$.
For any $\alpha,\beta\in\Omega$, the ordered pair $\left(H_{\alpha},H_{\beta}\right)$
is a random variable with distribution $\left(V_{\alpha}\times V_{\beta},\Sigma_{\alpha}\times\Sigma_{\beta},\mu_{\alpha,\beta}\right)$,
and $H\times H$ is a set of jointly distributed random variables
(hence also a random variable). 
\begin{defn}
We call an observable function $d:H\times H\rightarrow\mathbb{R}$
a pseudo-quasi-metric (\emph{p.q.-metric}) on $H$ if, for all $\alpha,\beta,\gamma\in\Omega$, 

(i) $d\left(H_{\alpha},H_{\beta}\right)\geq0$, 

(ii) $d\left(H_{\alpha},H_{\alpha}\right)=0$, 

(iii) $d\left(H_{\alpha},H_{\gamma}\right)\leq d\left(H_{\alpha},H_{\beta}\right)+d\left(H_{\beta},H_{\gamma}\right)$. 
\end{defn}
For terminological clarity, the conventional pseudometrics (also called
semimetrics) obtain by adding the property $d\left(H_{\alpha},H_{\beta}\right)=d\left(H_{\beta},H_{\alpha}\right)$;
the conventional quasimetrics are obtained by adding the property
$\alpha\not=\beta\Rightarrow d\left(H_{\alpha},H_{\beta}\right)>0$.
A conventional metric is both a pseudometric and a quasimetric. (See,
e.g., Zolotarev, 1976, for discussion of a variety of metrics and
pseudometrics on random variables.)

By obvious argument we can generalize the triangle inequality, (iii):
for any $H_{\alpha_{1}},\ldots,H_{\alpha_{l}}\in H$ ($l\geq3$),
\begin{equation}
d\left(H_{\alpha_{1}},H_{\alpha_{l}}\right)\leq\sum_{i=2}^{l}d\left(H_{\alpha_{i-1}},H_{\alpha_{i}}\right).
\end{equation}
 We refer to this inequality (which plays a central role in this paper)
as the\emph{ chain inequality}.

Let 
\[
R\subset\bigcup_{\left(\alpha,\beta\right)\in\Omega\times\Omega}V_{\alpha}\times V_{\beta},
\]
 and we write $a\preceq b$ to designate $\left(a,b\right)\in R$.
Let $R$ be a total order, that is, transitive, reflexive, and connected
in the sense that for any $\left(a,b\right)\in\bigcup_{\left(\alpha,\beta\right)\in\Omega\times\Omega}V_{\alpha}\times V_{\beta}$,
at least one of the relations $a\preceq b$ and $b\preceq a$ holds.
We define the equivalence $a\sim b$ and strict order $a\prec b$
induced by $\preceq$ in the usual way. Finally, we assume that for
any $\left(\alpha,\beta\right)\in\Omega\times\Omega$, the sets 
\[
\left\{ \left(a,b\right):a\in V_{\alpha},b\in V_{\beta},a\preceq b\right\} 
\]
 are $\mu_{\alpha,\beta}\textnormal{-}$measurable. This implies the
$\mu_{\alpha,\beta}\textnormal{-}$measurability of the sets 
\[
\left\{ \left(a,b\right):a\in V_{\alpha},b\in V_{\beta},a\prec b\right\} ,\;\left\{ \left(a,b\right):a\in V_{\alpha},b\in V_{\beta},a\sim b\right\} .
\]

Thus, if all $V_{\omega}$ are intervals of reals, $\preceq$ can
be chosen to coincide with $\leq$, and (assuming the usual Borel
sigma algebra) all the properties above are satisfied. Another example:
for arbitrary $V_{\omega}$, provided each $\Sigma_{\omega}$ contains
at least $n>1$ disjoint nonempty sets, one can partition $V_{\omega}$
as $\bigcup_{k=1}^{n}V_{\omega}^{\left(k\right)}$, with $V_{\omega}^{\left(k\right)}\in\Sigma_{\omega}$,
and put $a\preceq b$ if and only if $a\in V_{\alpha}^{\left(k\right)},b\in V_{\beta}^{\left(l\right)}$
and $k\leq l$. Again, all properties above are clearly satisfied. 
\begin{defn}
The function 
\[
\D\left(H_{\alpha},H_{\beta}\right)=\Pr\left[H_{\alpha}\prec H_{\beta}\right]=\int_{a\prec b}\dd\mu_{\alpha,\beta}\left(a,b\right)
\]
 is called an \emph{order p.q.-metric}, or \emph{order-distance},
on $H$. 
\end{defn}
That the definition is well-constructed follows from 
\begin{thm}
\label{thm:Order-distance-}Order-distance $\D$ is a p.q.-metric
on $H$.\end{thm}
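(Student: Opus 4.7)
The plan is to verify the three p.q.-metric conditions (i)--(iii) in turn. Conditions (i) and (ii) are essentially immediate: $\D(H_\alpha,H_\beta)$ is a probability by construction, so it is nonnegative and, thanks to the measurability assumed in the setup, well defined; and since $\preceq$ is reflexive, the relation $a\prec a$ never holds, so $\{H_\alpha\prec H_\alpha\}$ is an empty event and $\D(H_\alpha,H_\alpha)=0$.

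The core of the proof is the triangle inequality (iii). I would first establish, by a purely order-theoretic argument, the pointwise implication
\[
a\prec c \;\Longrightarrow\; a\prec b \ \text{or}\ b\prec c,
\]
valid for all $a\in V_\alpha$, $b\in V_\beta$, $c\in V_\gamma$. To see this, suppose $a\prec c$ and, for contradiction, that neither $a\prec b$ nor $b\prec c$ holds. By connectedness of $\preceq$, the failure of $a\prec b$ forces $b\preceq a$ (since otherwise both $a\preceq b$ and not $b\preceq a$ would give $a\prec b$), and the failure of $b\prec c$ similarly forces $c\preceq b$. Transitivity then yields $c\preceq a$, contradicting $a\prec c$.

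Once this pointwise statement is in hand, I would pass to the joint probability space. Because $H$ is a set of \emph{jointly} distributed random variables, the triple $(H_\alpha,H_\beta,H_\gamma)$ lives on a common sample space, and the implication above translates into the event inclusion
\[
\{H_\alpha\prec H_\gamma\} \;\subseteq\; \{H_\alpha\prec H_\beta\} \cup \{H_\beta\prec H_\gamma\}.
\]
Monotonicity and finite subadditivity of the probability measure, together with the consistency of the pairwise marginals $\mu_{\alpha,\beta}$, $\mu_{\beta,\gamma}$, $\mu_{\alpha,\gamma}$ with the joint, then deliver
\[
\D(H_\alpha,H_\gamma)\leq\D(H_\alpha,H_\beta)+\D(H_\beta,H_\gamma).
\]

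The only nontrivial point is the order-theoretic implication in the second paragraph; the remainder is bookkeeping with measurability (already granted) and standard properties of probability measures. It is worth flagging that joint distributability of $H$ is essential here: without a common sample space the event inclusion used to derive (iii) would not even be meaningful, and the triangle inequality could fail for marginals that do not admit a joint extension---a point that will become the driving theme of the later sections.
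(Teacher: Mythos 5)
Your proof is correct, but it takes a different route from the paper's. The paper proves the triangle inequality by an explicit disjoint decomposition: it writes each of $\Pr\left[A\prec B\right]$, $\Pr\left[A\prec X\right]$, $\Pr\left[X\prec B\right]$ as a sum of five mutually exclusive events classified by where the third variable falls in the total order, and then observes that $\D\left(A,X\right)+\D\left(X,B\right)-\D\left(A,B\right)$ collapses to a sum of probabilities, hence is nonnegative. You instead isolate the single order-theoretic fact doing the work --- that $a\prec c$ implies $a\prec b$ or $b\prec c$ for a total preorder, which you prove correctly from connectedness and transitivity --- and then finish by the event inclusion $\left\{ H_{\alpha}\prec H_{\gamma}\right\} \subseteq\left\{ H_{\alpha}\prec H_{\beta}\right\} \cup\left\{ H_{\beta}\prec H_{\gamma}\right\} $ and subadditivity on the common sample space. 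Your argument is more economical and makes transparent exactly which property of $\preceq$ is needed; your closing remark that joint distributability is what licenses the event inclusion is also well placed, since it is precisely the theme exploited later. What the paper's heavier computation buys is an exact expression for the slack $\D\left(A,X\right)+\D\left(X,B\right)-\D\left(A,B\right)$ as a sum of probabilities of pairwise exclusive events; this is used immediately after the theorem to derive the bound $\D\left(A,X\right)+\D\left(X,B\right)-\D\left(A,B\right)\leq1$ and to discuss its redundancy. Your subadditivity argument yields only the inequality, not the identity, so if you wanted that follow-up discussion you would need a supplementary computation.
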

\begin{proof}
Let $\alpha,\beta,\gamma\in\Omega$, and $H_{\alpha}=A$, $H_{\beta}=B$,
and $H_{\gamma}=X$. That $\D\left(A,B\right)$ is determined by the
distribution of $\left(A,B\right)$ is obvious from the definition.
The properties $\D\left(A,B\right)\geq0$ and $\D\left(A,A\right)=0$
are obvious too. To prove the triangle inequality, 
\[
\begin{split}\D\left(A,B\right)=\Pr\left[A\prec B\right]=\Pr\left[A\prec B\prec X\right]+\Pr\left[A\prec B\sim X\right]\\
+\Pr\left[A\prec X\prec B\right]+\Pr\left[A\sim X\prec B\right]+\Pr\left[X\prec A\prec B\right] & ,
\end{split}
\]
\[
\begin{split}\D\left(A,X\right)=\Pr\left[A\prec X\right]=\Pr\left[A\prec X\prec B\right]+\Pr\left[A\prec B\sim X\right]\\
+\Pr\left[A\prec B\prec X\right]+\Pr\left[A\sim B\prec X\right]+\Pr\left[B\prec A\prec X\right] & ,
\end{split}
\]
\[
\begin{split}\D\left(X,B\right)=\Pr\left[X\prec B\right]=\Pr\left[X\prec B\prec A\right]+\Pr\left[X\prec A\sim B\right]\\
+\Pr\left[X\prec A\prec B\right]+\Pr\left[A\sim X\prec B\right]+\Pr\left[A\prec X\prec B\right] & .
\end{split}
\]
 So 
\[
\begin{split}\D\left(A,X\right)+\D\left(X,B\right)-\D\left(A,B\right)=\Pr\left[B\prec A\prec X\right]+\Pr\left[A\sim B\prec X\right]\\
+\Pr\left[X\prec B\prec A\right]+\Pr\left[X\prec A\sim B\right]+\Pr\left[A\prec X\prec B\right] & \geq0.
\end{split}
\]

\end{proof}
Since in the last expression all events are pairwise exclusive, we
have

\[
\D\left(A,X\right)+\D\left(X,B\right)-\D\left(A,B\right)\leq1.
\]
 This may seem an attractive addition to the triangle inequality.
The inequality is redundant, however, as it is subsumed by the triangle
inequalities holding on $\left\{ A,B,X\right\} $. Rewriting the expression
above as 
\[
\D\left(A,B\right)+1-\D\left(X,B\right)-\D\left(A,X\right)\geq0,
\]
 it immediately follows from

\[
\D\left(A,B\right)+\D\left(B,X\right)-\D\left(A,X\right)\geq0
\]
 and 
\[
\D\left(B,X\right)=\Pr\left[B\prec X\right]\leq1-\Pr\left[X\prec B\right]=1-\D\left(X,B\right).
\]

\section{\label{sec:Selective-probabilistic-causalit}Selective probabilistic
causality}

Consider an indexed set $W=\left\{ W^{\lambda}:\lambda\in\Lambda\right\} $,
with each $W^{\lambda}$ being a set referred to as a (deterministic)
\emph{input}, with the elements of $\left\{ \lambda\right\} \times W^{\lambda}$
called \emph{input points}. Input points therefore are pairs of the
form $x=\left(\lambda,w\right)$ and should not be confused with input
values $w$. A nonempty set $\Phi\subset\prod_{\lambda\in\Lambda}W^{\lambda}$
is called a set of (allowable) \emph{treatments;} a treatment therefore
is also a set of pairs of the form $\left(\lambda,w\right)$.

Let there be a collection of sets of random variables, referred to
as (random) \emph{outputs}, 
\[
A_{\phi}=\left\{ A_{\phi}^{\lambda}:\lambda\in\Lambda\right\} ,\;\phi\in\Phi,
\]
 such that the distribution of $A_{\phi}$ (i.e., the joint distribution
of all $A_{\phi}^{\lambda}$ in $A_{\phi}$) is known for every treatment
$\phi$. We define 
\[
A^{\lambda}=\left\{ A_{\phi}^{\lambda}:\phi\in\Phi\right\} ,\;\lambda\in\Lambda,
\]
 with the understanding that $A^{\lambda}$ \emph{is not} a random
variable (i.e., $A_{\phi}^{\lambda}$ for different $\phi$ are not
jointly distributed).

The following problem is encountered in a wide variety of contexts
(see Dzhafarov, 2003; Dzhafarov \& Gluhovsky, 2006; Kujala \& Dzhafarov,
2008). We say that the dependence of random outputs $A_{\phi}^{\lambda}$
on the deterministic inputs $W^{\lambda}$ is (canonically) \emph{selective}
if, for every $\lambda\in\Lambda$ and every $\phi\in\Phi$, the output
$A_{\phi}^{\lambda}$ is ``influenced'' by none of the input points
in $\phi$ except, possibly, for the one belonging to $\left\{ \lambda\right\} \times W^{\lambda}$.
The question is how one should define this selectivity of ``influences''
rigorously, and how one can determine whether this selectivity holds.
This problem was introduced to behavioral sciences in Sternberg (1969)
and Townsend (1984). In quantum physics, using different terminology,
it was introduced in Bell (1964) and elaborated in Fine (1982a-b).
The definition can be given in several equivalent forms, of which
we present the one focal for the present context. 
\begin{defn}
\label{def:JDC}The dependence of $\left\{ A^{\lambda}:\lambda\in\Lambda\right\} $
on $\left\{ W^{\lambda}:\lambda\in\Lambda\right\} $ (or the ``influence''
of the latter on the former) is (canonically) selective if there is
a set of jointly distributed random variables 
\[
H=\left\{ H_{w}^{\lambda}:w\in W^{\lambda},\lambda\in\Lambda\right\} 
\]
 (one random variable for every value of every input), such that,
for every $\phi\in\Phi$, 
\[
\overline{H}_{\phi}=\overline{A}_{\phi},
\]
 where 
\[
H_{\phi}=\left\{ H_{w}^{\lambda}:\left(\lambda,w\right)\in\phi,\lambda\in\Lambda\right\} 
\]
 and 
\[
A_{\phi}=\left\{ A_{\phi}^{\lambda}:\lambda\in\Lambda\right\} 
\]
 (the corresponding elements of $H_{\phi}$ and $A_{\phi}$ being
those sharing the same $\lambda$). 

This definition is known as the \emph{Joint Distribution Criterion}
(JDC) for selectivity of influences, and the set $H$ satisfying this
definition is referred to as a (hypothetical) JDC-set. Specialized
forms of this criterion in quantum physics can be found in Suppes
\& Zanotti (1981) and Fine (1982a-b); in the behavioral context and
in complete generality this criterion is given (derived from an equivalent
definition) in Dzhafarov \& Kujala (2010).\end{defn}
\begin{rem}
The adjective ``canonical'' in the definition refers to the one-to-one
correspondence between $W^{\lambda}$ and $A^{\lambda}$ sharing the
same $\lambda$. A seemingly more general scheme, in which different
$A^{\lambda}$ are selectively influenced by different (possibly overlapping)
subsets of $\left\{ W^{\lambda}:\lambda\in\Lambda\right\} $ is always
reducible to the canonical form by considering, for every $A^{\lambda}$,
the Cartesian product of the inputs influencing it a single input,
and redefining correspondingly the sets of input points and the set
of allowable treatments.
\end{rem}
The simplest consequence of JDC is that the selectivity of influences
implies \emph{marginal selectivity} (Dzhafarov, 2003; Townsend \&
Schweickert, 1989), defined as follows. For any $\Lambda'\subset\Lambda$
we can uniquely present any $\phi\in\Phi$ as $\phi'\cup\overline{\phi'}$,
where $\phi'\in\prod_{\lambda\in\Lambda'}W^{\lambda}$ and $\overline{\phi'}\in\prod_{\lambda\in\Lambda-\Lambda'}W^{\lambda}$.
Then, if JDC is satisfied, the joint distribution of $\left\{ A_{\phi'\cup\overline{\phi'}}^{\lambda}:\lambda\in\Lambda'\right\} $
does not depend on $\overline{\phi'}$.
\begin{rem}
In the following we always assume that marginal selectivity is satisfied. 
\end{rem}
The relevance of the order-distance and other p.q.-metrics on the
sets of jointly distributed random variables to the problem of selectivity
lies in the general test (necessary condition) for selectivity of
influences, formulated after the following definition. 
\begin{defn}
We call a sequence of input points 
\[
x_{1}=\left(\alpha_{1},w_{1}\right),\ldots,x_{l}=\left(\alpha_{l},w_{l}\right)
\]
 (where $w_{i}\in W^{\alpha_{i}}$ for $i=1,\ldots,l\geq3$) \emph{treatment-realizable}
if there are treatments $\phi^{1},\ldots,\phi^{l}\in\Phi$ (not necessarily
pairwise distinct), such that 
\[
\left\{ x_{1},x_{l}\right\} \subset\phi^{1}\textnormal{ and }\left\{ x_{i-1},x_{i}\right\} \subset\phi^{i}\textnormal{ for }i=2,\ldots,l.
\]

\end{defn}
If a JDC-set $H$ exists, then for any p.q.-metric $d$ on $H$ we
should have 
\[
d\left(H_{w_{1}}^{\alpha_{1}},H_{w_{l}}^{\alpha_{l}}\right)=d\left(A_{\phi^{1}}^{\alpha_{1}},A_{\phi^{1}}^{\alpha_{l}}\right)
\]
 and 
\[
d\left(H_{w_{i-1}}^{\alpha_{i-1}},H_{w_{i}}^{\alpha_{i}}\right)=d\left(A_{\phi^{i}}^{\alpha_{i-1}},A_{\phi^{i}}^{\alpha_{i}}\right)
\]
 for $i=2,\ldots,l$ whence 
\begin{equation}
d\left(A_{\phi^{1}}^{\alpha_{1}},A_{\phi^{1}}^{\alpha_{l}}\right)\leq\sum_{i=2}^{l}d\left(A_{\phi^{i}}^{\alpha_{i-1}},A_{\phi^{i}}^{\alpha_{i}}\right).\label{eq:distance test}
\end{equation}
 This chain inequality, written entirely in terms of observable probabilities,
is referred to as a \emph{p.q.-metric test} for selectivity of influences.
If this inequality is violated for at least one treatment-realizable
sequence of input points, no JDC-set $H$ exists, and the selectivity
is ruled out. Note: if the sequence $\phi^{\left(1\right)},\ldots,\phi^{\left(l\right)}\in\Phi$
for a given $x_{1},\ldots,x_{l}$ can be chosen in more than one way,
the observable quantities $d\left(A_{\phi^{\left(1\right)}}^{\alpha_{1}},A_{\phi^{\left(1\right)}}^{\alpha_{l}}\right)$
and $d\left(A_{\phi^{\left(i-1\right)}}^{\alpha_{i-1}},A_{\phi^{\left(i\right)}}^{\alpha_{i}}\right)$
remain invariant due to the (tacitly assumed) marginal selectivity.

As an example, let $\Lambda=\left\{ 1,2\right\} $, $W^{1}=\left[0,1\right]$,
$W^{2}=\left[0,1\right]$, $\Phi=W^{1}\times W^{2}$. For any $\phi=\left\{ \left(1,v\right),\left(2,w\right)\right\} =\left(v,w\right)$,
let $\left\{ A_{\phi}^{1},A_{\phi}^{2}\right\} $ have a bivariate
normal distribution with zero means, unit variances, and correlation
$\rho=\min\left(1,v+w\right)$. Marginal selectivity is trivially
satisfied. Do $\left\{ W^{1},W^{2}\right\} $ influence $\left\{ A^{1},A^{2}\right\} $
selectively? For any bivariate normally distributed $\left(A,B\right)$,
let us define $A\prec B$ iff $A<0,B\geq0$. Then the corresponding
order-distance on the hypothetical JDC-set $H$ is 
\[
\D\left(H_{v}^{1},H_{w}^{2}\right)=\frac{\arccos\left(\min\left(1,v+w\right)\right)}{2\pi}.
\]
 The sequence of input points $\left(1,0\right),\left(2,1\right),\left(1,1\right),\left(2,0\right)$
is treatment-realizable, so if $H$ exists, we should have 
\[
\D\left(H_{0}^{1},H_{0}^{2}\right)\leq\D\left(H_{0}^{1},H_{1}^{2}\right)+\D\left(H_{1}^{2},H_{1}^{1}\right)+\D\left(H_{1}^{1},H_{0}^{2}\right).
\]
 The numerical substitutions yield, however, 
\[
\frac{1}{4}\leq0+0+0,
\]
 and as this is false, the hypothesis that $\left\{ W^{1},W^{2}\right\} $
influence $\left\{ A^{1},A^{2}\right\} $ selectively is rejected.

The theorem below and its corollary show that one only needs to check
the chain inequality for a special subset of all possible treatment-realizable
sequences $x_{1},\ldots,x_{l}$. 
\begin{defn}
A treatment-realizable sequence $x_{1},\ldots,x_{l}$ is called \emph{irreducible
}if $x_{1}\not=x_{l}$ and the only subsequences $\left\{ x_{i_{1}},\ldots,x_{i_{k}}\right\} $
with $k>1$ that are subsets of treatments are pairs $\left\{ x_{1},x_{l}\right\} \textnormal{ and }\left\{ x_{i-1},x_{i}\right\} $,
for $i=2,\ldots,l$. Otherwise the sequence is \emph{reducible}.\end{defn}
\begin{thm}
\label{thm:irreducible}Given a p.q.-metric $d$ on the hypothetical
JDC-set $H$, inequality (\ref{eq:distance test}) is satisfied for
all treatment-realizable sequences if and only if this inequality
holds for all irreducible sequences. \end{thm}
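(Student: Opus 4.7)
The ``only if'' direction is immediate, since every irreducible treatment-realizable sequence is itself treatment-realizable. For the ``if'' direction, I would proceed by strong induction on the length $l\ge 3$, deducing (\ref{eq:distance test}) for a reducible treatment-realizable sequence $x_1,\ldots,x_l$ from the hypothesis applied to irreducible sequences together with the inductive assumption that (\ref{eq:distance test}) holds for every treatment-realizable sequence of length less than $l$. Two trivial branches can be dispatched at once. If $x_1=x_l$, the left-hand side of (\ref{eq:distance test}) is $d\bigl(H_{w_1}^{\alpha_1},H_{w_1}^{\alpha_1}\bigr)=0$, so the inequality is free. If $l=3$ and the whole triple $\{x_1,x_2,x_3\}$ lies in some treatment $\psi$, then $H_{w_1}^{\alpha_1},H_{w_2}^{\alpha_2},H_{w_3}^{\alpha_3}$ are jointly distributed (as $A_\psi^{\alpha_1},A_\psi^{\alpha_2},A_\psi^{\alpha_3}$), and (\ref{eq:distance test}) is exactly the triangle inequality of $d$ on this triple, with marginal selectivity identifying each distance with the corresponding observable appearing in (\ref{eq:distance test}).

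The main reduction is through a \emph{chord}: a pair of indices $1\le i<j\le l$ with $j-i\ge 2$, $(i,j)\ne(1,l)$, and $\{x_i,x_j\}$ contained in some treatment. Given such a chord, I split the chain into
\[
\sigma_1=(x_i,x_{i+1},\ldots,x_j),\qquad \sigma_2=(x_1,\ldots,x_i,x_j,x_{j+1},\ldots,x_l),
\]
both treatment-realizable (the chord-treatment supplies the endpoint pair of $\sigma_1$ and the only new adjacent pair of $\sigma_2$; the remaining pairs are inherited from the original chain, and $\{x_1,x_l\}$ remains the endpoint pair of $\sigma_2$) and both of length strictly less than $l$ (the constraints $j-i\ge 2$ and $(i,j)\ne(1,l)$ yield the length reduction for $\sigma_2$ and $\sigma_1$ respectively). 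The inductive hypothesis applied to $\sigma_1$ and $\sigma_2$ then gives two instances of (\ref{eq:distance test}); substituting the bound for $\sigma_1$ into the single occurrence of $d\bigl(H_{w_i}^{\alpha_i},H_{w_j}^{\alpha_j}\bigr)$ in the bound for $\sigma_2$ reassembles precisely (\ref{eq:distance test}) for $x_1,\ldots,x_l$.

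It remains to produce a chord whenever the sequence is reducible and neither trivial branch applies. Reducibility supplies a subsequence $\{x_{i_1},\ldots,x_{i_k}\}$, $k\ge 2$, lying in a treatment and not equal to an allowed pair. If $k=2$, the subsequence is itself a chord. If $k\ge 3$, I would argue that some sub-pair inside the subsequence is a chord: if every consecutive-index sub-pair $\{x_{i_a},x_{i_{a+1}}\}$ were allowed, then either $(i_a,i_{a+1})=(1,l)$, which forces $k=2$, or $i_{a+1}=i_a+1$, in which case the $i_a$'s are consecutive integers; but then $\{x_{i_1},x_{i_3}\}$ is a chord unless $(i_1,i_3)=(1,l)$, forcing $l=3$, which returns us to the trivial triple branch already treated. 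This small combinatorial bookkeeping is the only real obstacle; once a chord is in hand, the inductive step closes routinely.
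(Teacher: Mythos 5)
Your proof is correct and follows essentially the same route as the paper's: both arguments hinge on extracting a pair $\{x_p,x_q\}$ contained in a treatment with $q>p+1$ and $(p,q)\neq(1,l)$ and splitting the chain into the two shorter treatment-realizable chains $x_p,\ldots,x_q$ and $x_1,\ldots,x_p,x_q,\ldots,x_l$, the paper running this as a contrapositive descent (a violated reducible chain yields a violated proper subchain) where you run it as strong induction. The only substantive difference is that you spell out the small combinatorial check that reducibility via a subsequence of size $k\ge 3$ still produces such a pair, a detail the paper asserts without proof; this is a refinement of the same argument, not a different one.
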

\begin{proof}
We prove this theorem by showing that if (\ref{eq:distance test})
is violated for some reducible sequence $x_{1},\ldots,x_{l}$, then
it is violated for some proper subsequence thereof. Clearly, $x_{1}\not=x_{l}$
because otherwise (\ref{eq:distance test}) is not violated. For $l=3$,
$x_{1},x_{2},x_{3}$ is reducible only if it is contained in a treatment:
but then (\ref{eq:distance test}) would be satisfied. So $l>3$,
and the reducibility of $x_{1},\ldots,x_{l}$ means that there is
a pair $\left\{ x_{p},x_{q}\right\} $ belonging to a treatment, with
$\left(p,q\right)\not=\left(1,l\right)$ and $q>p+1$. But then (\ref{eq:distance test})
must be violated for either $x_{p},\ldots,x_{q}$ or $x_{1},\ldots,x_{p},x_{q},\ldots,x_{l}$
(allowing for $p=1$ or $q=l$ but not both). 
\end{proof}
If $\Phi=\prod_{\lambda\in\Lambda}W^{\lambda}$ (all logically possible
treatments are allowable), then any subsequence $x_{i_{1}},\ldots,x_{i_{k}}$
of input points with pairwise distinct $\alpha_{i_{1}},\ldots,\alpha_{i_{k}}$
belongs to some treatment. Therefore an irreducible sequence cannot
contain points of more than two inputs, and it is easy to see that
then it must be a sequence of pairwise distinct $x_{1}\in\left\{ \alpha\right\} \times W^{\alpha},x_{2}\in\left\{ \beta\right\} \times W^{\beta},...,x_{2m-1}\in\left\{ \alpha\right\} \times W^{\alpha},x_{2m}\in\left\{ \beta\right\} \times W^{\beta}$
($\alpha\not=\beta$). It is also easy to see that if $m>2$, each
of the subsets $\left\{ x_{1},x_{4}\right\} $ and $\left\{ x_{2},x_{5}\right\} $
will belong to a treatment. Hence $m=2$ is the only possibility for
an irreducible sequence. 
\begin{cor}
\label{cor:If-,-then}If $\Phi=\prod_{\lambda\in\Lambda}W^{\lambda}$,
then inequality (\ref{eq:distance test}) is satisfied for all treatment-realizable
sequences if and only if this inequality holds for all tetradic sequences
of the form $x,y,s,t$, with $x,s\in\left\{ \alpha\right\} \times W^{\alpha}$,
$y,t\in\left\{ \beta\right\} \times W^{\beta}$, $x\not=s$, $y\not=t$,
$\alpha\not=\beta$. \end{cor}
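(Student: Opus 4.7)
The plan is to combine Theorem \ref{thm:irreducible} with a classification of the irreducible sequences available when $\Phi = \prod_{\lambda \in \Lambda} W^{\lambda}$. The pivotal observation is that, under this hypothesis, any finite collection of input points whose $\lambda$-components are pairwise distinct is contained in some treatment: one simply completes the partial specification to a full product element. Recall that in an irreducible sequence the only subsequences of size $k > 1$ lying in a single treatment are the endpoint pair $\{x_{1}, x_{l}\}$ and the consecutive pairs $\{x_{i-1}, x_{i}\}$. This will force very rigid structure.

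Step one: an irreducible sequence uses at most two distinct $\lambda$-components. Otherwise some three positions $i_{1} < i_{2} < i_{3}$ carry pairwise distinct components, and the pivotal observation places the triple $\{x_{i_{1}}, x_{i_{2}}, x_{i_{3}}\}$ into a single treatment, yet a size-three subsequence is not among the permitted ones, contradicting irreducibility.

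Step two: with exactly two components $\alpha \neq \beta$, the length is exactly four. Consecutive pairs lie in treatments, so $\alpha_{i-1} \neq \alpha_{i}$ and the sequence alternates strictly. The endpoint pair $\{x_{1}, x_{l}\}$ must also lie in a treatment while satisfying $x_{1} \neq x_{l}$, which forces $\alpha_{1} \neq \alpha_{l}$ and hence $l$ even. If $l \geq 6$, then $\{x_{1}, x_{4}\}$ has distinct $\lambda$-components and lies in a treatment, yet the position $(1, 4)$ is neither $(1, l)$ nor consecutive, again contradicting irreducibility. Hence $l = 4$ and the sequence takes the form $x, y, s, t$ with $x, s \in \{\alpha\} \times W^{\alpha}$ and $y, t \in \{\beta\} \times W^{\beta}$. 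The distinctness $x \neq s$ and $y \neq t$ follows once more from irreducibility: if $x = s$, for instance, then as a set $\{x_{1}, x_{3}\}$ collapses to the singleton $\{x\}$, which lies in every treatment containing $x$, while $(1, 3)$ is neither the endpoint pair nor consecutive.

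Conversely every tetradic sequence of the stated form is manifestly treatment-realizable (each pair with distinct $\lambda$-components extends to a treatment) and irreducible (no other pair or triple lies in a treatment, using $x \neq s$ and $y \neq t$). Theorem \ref{thm:irreducible} then delivers the claimed equivalence. I do not expect a real obstacle; the whole argument is essentially bookkeeping once the pivotal observation is in hand. The only mildly delicate spot is verifying that the position $(1, 4)$ falls outside both allowed families precisely when $l \neq 4$, which is immediate.
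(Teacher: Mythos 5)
Your proposal is correct and follows essentially the same route as the paper: invoke Theorem \ref{thm:irreducible}, note that under $\Phi=\prod_{\lambda\in\Lambda}W^{\lambda}$ any set of input points with pairwise distinct input indices lies in a treatment, and deduce that an irreducible sequence must alternate between two inputs and (via the pair $\left\{ x_{1},x_{4}\right\} $) have length exactly four. You merely spell out the alternation, parity, and distinctness steps that the paper dismisses with ``it is easy to see.''
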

\begin{rem}
This formulation is given in Dzhafarov and Kujala (2010), although
there it is unnecessarily confined to metrics of a special kind.
\end{rem}

\section{\label{sec:An-application}An application}

The four tables below represent results of an experiment with a $2\times2$
factorial design, $\left\{ x,x'\right\} \times\left\{ y,y'\right\} $,
and two binary responses, $A$ and $B$. In relation to our general
notation, we have here $\Lambda=\left\{ 1,2\right\} $, $W^{1}=\left\{ x,x'\right\} $,
$W^{2}=\left\{ y,y'\right\} $, and four treatments $\left(x,y\right),\ldots,\left(x',y'\right)$;
for every treatment $\phi$, the random outputs $A_{\phi}^{1}$ and
$A_{\phi}^{2}$ are represented by, respectively, $A_{\phi}$ and
$B_{\phi}$, each having two possible values, arbitrarily labeled.
This design is arguably the simplest possible, and it is ubiquitous
in science. In a psychological double-detection experiment (see, e.g.,
Townsend \& Nozawa, 1995), the input values may represent presence
($x$ and $y$) or absence ($x'$ and $y'$) of a designated signal
in two stimuli labeled $1$ and $2$, presented side-by-side. The
participant in such an experiment is asked to indicate whether the
signal was present or absent in stimulus 1 and in stimulus 2. The
output values $A=\circ$ and $B=\sqcap$ may indicate either that
the response was ``signal present'' or that the response was correct;
and analogously for $A=\bullet$ and $B=\sqcup$ (either ``signal
absent'' or an incorrect response). The entries $p_{ij},q_{ij}$,
etc. represent joint probabilities of the corresponding outcomes,
$a_{i\cdot},a'_{i\cdot}$, etc. represent marginal probabilities.
The question to be answered is: does the response to a given stimulus
($A$ to 1 and $B$ to 2) selectively depend on that stimulus alone
(despite $A$ and $B$ being stochastically dependent for every treatment),
or is $A$ or $B$ influenced by both 1 and 2?

\noindent \begin{center}
\begin{tabular}{l|c|c|c}
\cline{2-3} 
\multicolumn{1}{l|}{$\phi=\left(x,y\right)$} & $B_{xy}=\sqcup$ & $B_{xy}=\sqcap$ & \tabularnewline
\cline{1-3} 
\multicolumn{1}{|l|}{$A_{xy}=\bullet$ } & $p_{11}$  & $p_{12}$  & $a_{1\cdot}$\tabularnewline
\cline{1-3} 
\multicolumn{1}{|l|}{$A_{xy}=\circ$ } & $p_{21}$  & $p_{22}$  & $a_{2\cdot}$\tabularnewline
\cline{1-3} 
\multicolumn{1}{l}{} & \multicolumn{1}{c}{$b_{\cdot1}$} & \multicolumn{1}{c}{$b_{\cdot2}$} & \tabularnewline
\multicolumn{4}{c}{}\tabularnewline
\cline{2-3} 
\multicolumn{1}{l|}{$\phi=\left(x',y\right)$} & $B_{x'y}=\sqcup$  & $B_{x'y}=\sqcap$  & \tabularnewline
\cline{1-3} 
\multicolumn{1}{|l|}{$A_{x'y}=\bullet$ } & $r_{11}$  & $r_{12}$  & $a'_{1\cdot}$\tabularnewline
\cline{1-3} 
\multicolumn{1}{|l|}{$A_{x'y}=\circ$ } & $r_{21}$  & $r_{22}$  & $a'_{2\cdot}$\tabularnewline
\cline{1-3} 
\multicolumn{1}{l}{} & \multicolumn{1}{c}{$b_{\cdot1}$} & \multicolumn{1}{c}{$b_{\cdot2}$} & \tabularnewline
\end{tabular}%
\begin{tabular}{l|c|c|c}
\cline{2-3} 
\multicolumn{1}{l|}{$\phi=\left(x,y'\right)$} & $B_{xy'}=\sqcup$  & $B_{xy'}=\sqcap$ & \tabularnewline
\cline{1-3} 
\multicolumn{1}{|l|}{$A_{xy'}=\bullet$ } & $q_{11}$  & $q_{12}$  & $a_{1\cdot}$\tabularnewline
\cline{1-3} 
\multicolumn{1}{|l|}{$A_{xy'}=\circ$ } & $q_{21}$  & $q_{22}$  & $a_{2\cdot}$\tabularnewline
\cline{1-3} 
\multicolumn{1}{l}{} & \multicolumn{1}{c}{$b'_{\cdot1}$} & \multicolumn{1}{c}{$b'_{\cdot2}$} & \tabularnewline
\multicolumn{4}{c}{}\tabularnewline
\cline{2-3} 
\multicolumn{1}{l|}{$\phi=\left(x',y'\right)$} & $B_{x'y'}=\sqcup$  & $B_{x'y'}=\sqcap$  & \tabularnewline
\cline{1-3} 
\multicolumn{1}{|l|}{$A_{x'y'}=\bullet$ } & $s_{11}$  & $s_{12}$  & $a'_{1\cdot}$\tabularnewline
\cline{1-3} 
\multicolumn{1}{|l|}{$A_{x'y'}=\circ$ } & $s_{21}$  & $s_{22}$  & $a'_{2\cdot}$\tabularnewline
\cline{1-3} 
\multicolumn{1}{l}{} & \multicolumn{1}{c}{$b'_{\cdot1}$} & \multicolumn{1}{c}{$b'_{\cdot2}$} & \tabularnewline
\end{tabular}
\par\end{center}

Another important situation in which we encounter formally the same
problem is the Einstein-Podolsky-Rosen (EPR) paradigm. Two particles
are emitted from a common source in such a way that they remain \emph{entangled}
(have highly correlated properties, such as momenta or spins) as they
run away from each other (Aspect, 1999; Mermin, 1985). An experiment
may consist, e.g., in measuring the spin of electron 1 along one of
two axes, $x$ or $x'$, and (in another location but simultaneously
in some inertial frame of reference) measuring the spin of electron
2 along one of two axes, $y$ or $y'$. The outcome $A$ of a measurement
on electron 1 is a random variable with two possible values, ``up''
or ``down,'' and the same holds for $B$, the outcome of a measurement
on electron 2. The question here is: do the measurements on electrons
1 and 2 selectively affect, respectively, $A$ and $B$ (even though
generally $A$ and $B$ are not independent at any of the four combinations
of spin axes)? If the answer is negative, then the measurement of
one electron affects the outcome of the measurement of another electron
even though no signal can be exchanged between two distant events
that are simultaneous in some frame of reference. What makes this
situation formally identical to the double-detection example described
above is that the measurements performed along different axes on the
same particle, $x$ and $x'$ or $y$ and $y'$, are \emph{non-commuting},
i.e., they cannot be performed simultaneously. This makes it possible
to consider such measurements as mutually exclusive values of an input. 
\begin{thm}
{[}Fine, 1982a-b{]} A JDC-set $H=\left\{ H_{x}^{1},H_{x'}^{1},H_{y}^{2},H_{y'}^{2}\right\} $
satisfying 
\[
\begin{array}{cc}
\overline{\left\{ H_{x}^{1},H_{y}^{2}\right\} }=\overline{\left\{ A_{xy},B_{xy}\right\} }, & \overline{\left\{ H_{x}^{1},H_{y'}^{2}\right\} }=\overline{\left\{ A_{xy'},B_{xy'}\right\} },\\
\\
\overline{\left\{ H_{x'}^{1},H_{y}^{2}\right\} }=\overline{\left\{ A_{x'y},B_{x'y}\right\} }, & \overline{\left\{ H_{x'}^{1},H_{y'}^{2}\right\} }=\overline{\left\{ A_{x'y'},B_{x'y'}\right\} }
\end{array}
\]
 exists if and only if the following eight inequalities are satisfied:
\begin{equation}
\begin{array}{c}
-1\leq p_{11}+r_{11}+s_{11}-q_{11}-a'_{1\cdot}-b_{\cdot1}\leq0,\\
-1\leq q_{11}+s_{11}+r_{11}-p_{11}-a'_{1\cdot}-b'_{\cdot1}\leq0,\\
-1\leq r_{11}+p_{11}+q_{11}-s_{11}-a{}_{1\cdot}-b_{\cdot1}\leq0,\\
-1\leq s_{11}+q_{11}+p_{11}-r_{11}-a{}_{1\cdot}-b'_{\cdot1}\leq0.
\end{array}\label{eq:Fine}
\end{equation}

\end{thm}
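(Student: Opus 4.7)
The proof will split into necessity and sufficiency, with necessity handled by the p.q.-metric machinery of Section \ref{sec:Selective-probabilistic-causalit} and sufficiency requiring a separate explicit construction.

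\emph{Necessity.} My plan is to derive each inequality in (\ref{eq:Fine}) from Corollary \ref{cor:If-,-then} applied to the order-distance $\D$ of Theorem \ref{thm:Order-distance-} with a classification-type total order. Each $V^\lambda$ is binary, so I partition it into its two singletons; on $V^1$ I declare either $\bullet$ or $\circ$ to be the ``lower'' class, and analogously on $V^2$, yielding four classifications. For the tetradic sequence $(1,x),(2,y),(1,x'),(2,y')$, the chain inequality
\[
\D(H_x^1, H_{y'}^2) \leq \D(H_x^1, H_y^2) + \D(H_y^2, H_{x'}^1) + \D(H_{x'}^1, H_{y'}^2),
\]
rewritten in terms of observable joint probabilities, reduces when the two parity choices agree to the upper Fine bound $p_{11}+r_{11}+s_{11}-q_{11}-a'_{1\cdot}-b_{\cdot1} \leq 0$, and when they disagree, after substituting the identity $r_{22}=1-a'_{1\cdot}-b_{\cdot1}+r_{11}$ and its analogues, to the lower bound $-1\leq p_{11}+r_{11}+s_{11}-q_{11}-a'_{1\cdot}-b_{\cdot1}$. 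Cycling this tetradic sequence through its three nontrivial rotations produces the remaining three pairs of inequalities in (\ref{eq:Fine}); by Corollary \ref{cor:If-,-then} these eight exhaust all treatment-realizable chain inequalities, so any JDC-set must satisfy them.

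\emph{Sufficiency.} Assuming (\ref{eq:Fine}), I plan to construct an explicit joint distribution $P(a,b,c,d)=\Pr[H_x^1=a, H_{x'}^1=b, H_y^2=c, H_{y'}^2=d]$ on $\{\bullet,\circ\}^2\times\{\sqcup,\sqcap\}^2$ whose four cross-pair marginals coincide with the four observed tables. Marginal selectivity already fixes the univariate marginals, and each observed cross-pair table then consumes three further degrees of freedom. Following Fine's (1982a-b) construction, the plan is to posit specific values for the two unobserved within-input joint distributions (on $\{H_x^1, H_{x'}^1\}$ and on $\{H_y^2, H_{y'}^2\}$) and then solve by inclusion-exclusion for the sixteen atoms $P(a,b,c,d)$, verifying that every atom is nonnegative if and only if (\ref{eq:Fine}) holds.

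The hard part will be the sufficiency direction. Necessity follows essentially mechanically from Corollary \ref{cor:If-,-then} together with an enumeration of four cyclic rotations and two parity classifications; sufficiency, by contrast, requires one to find a construction whose sixteen nonnegativity conditions collapse to \emph{exactly} the eight inequalities of (\ref{eq:Fine}) --- no fewer and no more --- and the bookkeeping for this facet-matching is the computational core of the argument.
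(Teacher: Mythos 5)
The paper itself does not prove this theorem: it is quoted from Fine (1982a-b), and the Remark that follows it indicates the route the authors endorse, namely that existence of the JDC-set is equivalent to the existence of a $16$-component probability vector $Q$ reproducing the eight observable probabilities as partial sums, so that (\ref{eq:Fine}) arises as the facet description of the projection of a polytope --- a finite linear-programming (Fourier--Motzkin) computation. Your necessity half is sound and in fact anticipates the paper's subsequent theorem, which shows that the right-hand inequalities of (\ref{eq:Fine}) are exactly the chain inequalities (\ref{eq:D1}) for $\D_{1}$ (with $\bullet=\sqcup=1$, $\circ=\sqcap=2$) and the left-hand ones are the chain inequalities (\ref{eq:D2}) for $\D_{2}$ (with $\bullet=\sqcap=1$, $\circ=\sqcup=2$); note that for necessity you do not even need Corollary \ref{cor:If-,-then}, only the chain inequality itself applied to the hypothetical JDC-set.

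The genuine gap is the sufficiency direction, which your text announces but does not carry out, and whose outline as written would not go through. Fixing the four univariate marginals and the four observed bivariate tables imposes $4+4=8$ independent constraints on the $15$ free parameters of a distribution on $16$ atoms; positing the two unobserved within-input bivariate tables adds only two more. Hence ``solving by inclusion--exclusion for the sixteen atoms'' is not a determinate procedure: five parameters (the four trivariate interactions and the quadrivariate one) remain free, and inclusion--exclusion from bivariate marginals alone does not yield the atoms. The entire content of Fine's theorem is the existence claim --- that for \emph{some} admissible choice of the unobserved bivariates and of these remaining parameters all sixteen atoms are nonnegative precisely when (\ref{eq:Fine}) holds --- and nothing in your outline establishes this. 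Relatedly, the assertion that the sixteen nonnegativity conditions ``collapse to exactly the eight inequalities'' is not the correct target: those conditions involve unobservable parameters, so what must be proved is that the projection of the solution set onto the observable coordinates is cut out by (\ref{eq:Fine}) together with the trivial bounds, which is a quantifier-elimination statement, not a termwise identification. Either carry out Fine's explicit construction in full, or switch to the linear-programming/facet-enumeration argument sketched in the paper's Remark.
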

We refer to (\ref{eq:Fine}) as \emph{Bell-CHSH-Fine inequalities},
where CHSH abbreviates Clauser, Horne, Shimony, \& Holt (1969): in
this work Bell's (1964) approach was developed into a special version
of (\ref{eq:Fine}).
\begin{rem}
The proof given in Fine (1982a-b) that (\ref{eq:Fine}) is both necessary
and sufficient (under marginal selectivity) for the existence of a
JDC-set can be conceptually simplified: the Bell-CHSH-Fine inequalities
can be algebraically shown to be the criterion for the existence of
a vector $Q$ with 16 probabilities 
\[
\begin{split}\Pr\left[H_{x}^{1}=\bullet,H_{x'}^{1}=\bullet,H_{x}^{1}=\sqcup,H_{x}^{1}=\sqcup\right],\ldots,\\
\Pr\left[H_{x}^{1}=\circ,H_{x'}^{1}=\circ,H_{x}^{1}=\sqcap,H_{x}^{1}=\sqcap\right]
\end{split}
\]
 that sum to one and whose appropriately chosen partial sums yield
the 8 observable probabilities 
\[
p_{11},q_{11},r_{11},s_{11},a{}_{1\cdot},b_{\cdot1},a'{}_{1\cdot},b'_{\cdot1}
\]
 (other probabilities being determined due to marginal selectivity).
This is a simple linear programming task, and the Bell-CHSH-Fine inequalities
can be derived ``mechanically'' by a facet enumeration algorithm
(see Werner \& Wolf, 2001a-b, and Basoalto \& Percival, 2003). 
\end{rem}
The point of interest in the present context is that the Bell-CHSH-Fine
inequalities, whose rather obscure structure does not seem to fit
their fundamental importance, turn out to be interpretable as the
triangle inequalities for appropriately chosen order-distances.

Consider the chain inequalities for the order-distance $\D_{1}$ obtained
by putting $\bullet=\sqcup=1$, $\circ=\sqcap=2$, and identifying
$\preceq$ with $\leq$: 
\begin{equation}
\begin{split}q_{12}=\D_{1}(H_{x}^{1},\! H_{y'\!}^{2}) & \leq\D_{1}(H_{x}^{1},\! H_{y}^{2})\!+\!\D_{1}(H_{y}^{2},\! H_{x'\!}^{1})\!+\!\D_{1}(H_{x'\!}^{1},\! H_{y'\!}^{2})=p_{12}\!+\! r_{21}\!+\! s_{12},\!\\
p_{12}=\D_{1}(H_{x}^{1},\! H_{y}^{2}) & \leq\D_{1}(H_{x}^{1},\! H_{y'\!}^{2})\!+\!\D_{1}(H_{y'\!}^{2},\! H_{x'\!}^{1})\!+\!\D_{1}(H_{x'\!}^{1},\! H_{y}^{2})=q_{12}\!+\! s_{21}\!+\! r_{12},\!\\
s_{12}=\D_{1}(H_{x'\!}^{1},\! H_{y'\!}^{2}) & \leq\D_{1}(H_{x'\!}^{1},\! H_{y}^{2})\!+\!\D_{1}(H_{y}^{2},\! H_{x}^{1})\!+\!\D_{1}(H_{x}^{1},\! H_{y'\!}^{2})=r_{12}\!+\! p_{21}\!+\! q_{12},\!\\
r_{12}=\D_{1}(H_{x'\!}^{1},\! H_{y}^{2}) & \leq\D_{1}(H_{x'\!}^{1},\! H_{y'\!}^{2})\!+\!\D_{1}(H_{y'\!}^{2},\! H_{x}^{1})\!+\!\D_{1}(H_{x}^{1},\! H_{y}^{2})=s_{12}\!+\! q_{21}\!+\! p_{12}.
\end{split}
\label{eq:D1}
\end{equation}
 Consider also the inequalities for the order-distance $\D_{2}$ obtained
by putting $\bullet=\sqcap=1$, $\circ=\sqcup=2$, and identifying
$\preceq$ with $\leq$:

\begin{equation}
\begin{split}q_{11}=\D_{2}(H_{x}^{1},\! H_{y'\!}^{2}) & \leq\D_{2}(H_{x}^{1},\! H_{y}^{2})\!+\!\D_{2}(H_{y}^{2},\! H_{x'\!}^{1})\!+\!\D_{2}(H_{x'\!}^{1},\! H_{y'\!}^{2})=p_{11}\!+\! r_{22}\!+\! s_{11},\!\\
p_{11}=\D_{2}(H_{x}^{1},\! H_{y}^{2}) & \leq\D_{2}(H_{x}^{1},\! H_{y'\!}^{2})\!+\!\D_{2}(H_{y'\!}^{2},\! H_{x'\!}^{1})\!+\!\D_{2}(H_{x'\!}^{1},\! H_{y}^{2})=q_{11}\!+\! s_{22}\!+\! r_{11},\!\\
s_{11}=\D_{2}(H_{x'\!}^{1},\! H_{y'\!}^{2}) & \leq\D_{2}(H_{x'\!}^{1},\! H_{y}^{2})\!+\!\D_{2}(H_{y}^{2},\! H_{x}^{1})\!+\!\D_{2}(H_{x}^{1},\! H_{y'\!}^{2})=r_{11}\!+\! p_{22}\!+\! q_{11},\!\\
r_{11}=\D_{2}(H_{x'\!}^{1},\! H_{y}^{2}) & \leq\D_{2}(H_{x'\!}^{1},\! H_{y'\!}^{2})\!+\!\D_{2}(H_{y'\!}^{2},\! H_{x}^{1})\!+\!\D_{2}(H_{x}^{1},\! H_{y}^{2})=s_{11}\!+\! q_{22}\!+\! p_{11}.
\end{split}
\label{eq:D2}
\end{equation}

\begin{thm}
Each right-hand Bell-CHSH-Fine inequality is equivalent to the corresponding
chain inequality in (\ref{eq:D1}) for the order-distance $\D_{1}$.
Each left-hand Bell-CHSH-Fine inequality is equivalent to the corresponding
chain inequality in (\ref{eq:D2}) for the order-distance $\D_{2}$.\end{thm}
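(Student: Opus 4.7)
The plan is to verify each of the eight equivalences by direct algebraic substitution, exploiting that every cell probability appearing in~(\ref{eq:D1}) or~(\ref{eq:D2}) can, under marginal selectivity, be expressed in terms of one of $p_{11},q_{11},r_{11},s_{11}$ together with treatment-invariant marginals. The four inequalities in each of~(\ref{eq:D1}) and~(\ref{eq:D2}) are structurally identical under cyclic relabelling of $(x,x',y,y')$, so it will suffice to carry out one representative case for each order-distance; the other six pairings reduce to bookkeeping with primes.

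First I would tabulate the order-distances themselves. For $\D_1$, where $\bullet=\sqcup=1$, $\circ=\sqcap=2$ and $\preceq$ is $\leq$, the event $H_\alpha \prec H_\beta$ corresponds to ``row variable equals $1$, column variable equals $2$'', giving $\D_1(H_x^1,H_y^2)=p_{12}$, $\D_1(H_y^2,H_x^1)=p_{21}$, and analogously through the other three treatments. For $\D_2$, with $\bullet=\sqcap=1$, $\circ=\sqcup=2$, the same event now falls on the diagonal of the corresponding table, giving $\D_2(H_x^1,H_y^2)=p_{11}$, $\D_2(H_y^2,H_x^1)=p_{22}$, and so on. This tabulation is what makes the eight chain inequalities visibly the same expressions as those in~(\ref{eq:D1}) and~(\ref{eq:D2}).

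Next I would apply the marginal identities. Under marginal selectivity,
\[
p_{12}=a_{1\cdot}-p_{11},\qquad p_{21}=b_{\cdot 1}-p_{11},\qquad p_{22}=1-a_{1\cdot}-b_{\cdot 1}+p_{11},
\]
with analogues for $q,r,s$ using the appropriate primed or unprimed marginals (it is here that marginal selectivity is genuinely used, since otherwise the ``$a_{1\cdot}$'' appearing in two different tables would not denote the same number). Substituting into the first chain of~(\ref{eq:D1}), $q_{12}\leq p_{12}+r_{21}+s_{12}$, the common $a_{1\cdot}$ cancels and we are left with $p_{11}+r_{11}+s_{11}-q_{11}-a'_{1\cdot}-b_{\cdot 1}\leq 0$, which is the first right-hand Bell--CHSH--Fine inequality. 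The other three chains in~(\ref{eq:D1}) unwind identically, producing the remaining right-hand inequalities with only the pattern of primes on $a,b$ permuted.

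Finally, for $\D_2$ the same mechanical substitution applies, except that the ``$*_{22}$''-type term on each right-hand side introduces an additive constant via $r_{22}=1-a'_{1\cdot}-b_{\cdot 1}+r_{11}$ (and analogues). Substituting into $q_{11}\leq p_{11}+r_{22}+s_{11}$ and rearranging gives $p_{11}+r_{11}+s_{11}-q_{11}-a'_{1\cdot}-b_{\cdot 1}\geq -1$, the first left-hand Bell--CHSH--Fine inequality; the $-1$ is precisely that constant. The remaining three chains in~(\ref{eq:D2}) behave the same way. There is no conceptual obstacle here; the only real risk is misplacing indices or primes across the four tables, and tabulating the eight values of $\D_1$ and $\D_2$ once at the start eliminates that risk.
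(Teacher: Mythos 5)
Your proposal is correct and follows essentially the same route as the paper: identify each $\D_1$ and $\D_2$ value with the appropriate cell probability, then convert the chain inequalities into the Bell--CHSH--Fine form via the marginal identities $p_{12}=a_{1\cdot}-p_{11}$, $r_{21}=b_{\cdot1}-r_{11}$, $s_{12}=a'_{1\cdot}-s_{11}$, and $r_{22}=1+r_{11}-a'_{1\cdot}-b_{\cdot1}$, with the additive constant $-1$ arising exactly as you describe. The paper likewise works out only the first double-inequality and leaves the remaining cases to the same substitution pattern.
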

\begin{proof}
We show the proof for the first of the Bell-CHSH-Fine double-inequalities.
The equivalence of
\end{proof}
\[
p_{11}+r_{11}+s_{11}-q_{11}-a'_{1\cdot}-b_{\cdot1}\leq0
\]
 to 
\[
q_{12}\leq p_{12}+r_{21}+s_{12}
\]
 obtains by using the identities 
\[
\begin{array}{c}
q_{12}=a{}_{1\cdot}-q_{11},\\
p_{12}=a_{1\cdot}-p_{11},\\
r_{21}=b_{\cdot1}-r_{11},\\
s_{12}=a'{}_{1\cdot}-s_{11}.
\end{array}
\]
 The equivalence of 
\[
p_{11}+r_{11}+s_{11}-q_{11}-a'_{1\cdot}-b_{\cdot1}\geq-1
\]
 to 
\[
q_{11}\leq p_{11}+r_{22}+s_{11}
\]
 follows from the identity 
\[
\begin{array}{c}
r_{22}=1+r_{11}-a'_{1\cdot}-b_{\cdot1}.\end{array}
\]

\section{\label{sec:Concluding-remarks}Concluding remarks}

The order-distances are versatile and have a broad sphere of applicability
because order relations on the domains of any given set of random
variables can always be defined in many different ways. If no other
structure is available, this can always be done by the partitioning
of the domains mentioned in Section \ref{sec:Order-p.q.-metrics}
and used in the example with bivariate normal distributions in Section
\ref{sec:Selective-probabilistic-causalit} as well as for the binary
variables of the previous section: $V_{\omega}=\bigcup_{k=1}^{n}V_{\omega}^{\left(k\right)}$,
$V_{\omega}^{\left(k\right)}\in\Sigma_{\omega}$, $\omega\in\Omega$,
putting $a\preceq b$ if and only if $a\in V_{\alpha}^{\left(k\right)},b\in V_{\beta}^{\left(l\right)}$
and $k\leq l$. Due to its universality and convenience of use, it
deserves a special name, \emph{classification distance}.

Under additional constraints one can suggest many other p.q.-metrics
on sets of jointly distributed random variables. Thus, if the variables
in $H$ are real-valued with the conventional Borel sigma algebras,
one can define, for any $A,B\in H$, 
\begin{equation}
d^{\left(p\right)}\left(A,B\right)=\begin{cases}
\sqrt[p]{\mathrm{E}\left[\left|A-B\right|^{p}\right]} & \textnormal{for }1\leq p<\infty,\\
\mathrm{ess}\sup\left|A-B\right| & \textnormal{for }p=\infty,
\end{cases}\label{eq:p-dist}
\end{equation}
 where 
\[
\mathrm{ess}\sup\left|A-B\right|=\inf\left\{ v:\Pr\left[\left|A-B\right|\leq v\right]=1\right\} .
\]
 These p.q.-metrics are conventional metrics. In the context of selective
influences these metrics have been introduced in Kujala \& Dzhafarov
(2008) and further analyzed in Dzhafarov \& Kujala (2010). An important
property of $d^{\left(p\right)}$ is that the result of a $d^{\left(p\right)}\textnormal{-}$based
distance-type test is not invariant with respect to input-value-specific
transformations of the random variables $A_{\phi}^{\lambda}$, $\phi\in\Phi$,
$\lambda\in\Lambda$. This means that the test can be performed on
a potential infinity of sets of random variables $B_{\phi}^{\lambda}=F\left(x_{\lambda},A_{\phi}^{\lambda}\right)$,
with $x_{\lambda}\in\left(\left\{ \lambda\right\} \times W^{\lambda}\right)\cap\phi$.

If the jointly distributed random variables constituting the set $H$
are discrete, one can use information-based p.q.-metric. Perhaps the
simplest of them is 
\begin{equation}
h\left(A|B\right)=-\sum_{a,b}p_{AB}\left(a,b\right)\log\frac{p_{AB}\left(a,b\right)}{p_{B}\left(b\right)},\quad A,B\in H,
\end{equation}
 with the conventions $0\log\frac{0}{0}=0\log0=0$. is This function
is called \emph{conditional entropy}. The identity $h\left(A|A\right)=0$
is obvious, and the triangle inequality, 
\[
h\left(A|B\right)\leq h\left(A|C\right)+h\left(C|B\right),
\]
 follows from the standard information theory (in)equalities, 
\[
h\left(A|B\right)\leq h\left(A,C|B\right),
\]
 
\[
h\left(A,C|B\right)=h\left(A|C,B\right)+h\left(C|B\right),
\]
 and 
\[
h\left(A|C,B\right)\leq h\left(A|C\right).
\]
 Note that, unlike with the distance $d^{\left(p\right)}$ above,
the test of selectiveness based on $h\left(A,B\right)$ (and other
information-based distances) is invariant with respect to all bijective
transformations of the variables. The additively symmetrized (i.e.,
pseudometric) version of this p.q.-metric, $h\left(A|B\right)+h\left(B|A\right)$
is well-known (Cover \& Thomas, 1990).

There are numerous ways of creating new p.q.-metrics from the ones
already constructed, including those taken from outside probabilistic
context. Thus, if $d$ is a p.q.-metric on a set $S$, then, for any
set $H$ of jointly distributed random variables taking their values
in $S$, 
\[
D\left(A,B\right)=\mathrm{E}\left[d\left(A,B\right)\right],\quad A,B\in H,
\]
 is a p.q.-metric on $H$. This follows from the fact that expectation
$\mathrm{E}$ preserves inequalities and equalities identically satisfied
for all possible realizations of the arguments. Thus, the distance
$d^{\left(1\right)}\left(A,B\right)=\mathrm{E}\left[\left|A-B\right|\right]$
trivially obtains from the metric $\left|a-b\right|$ on reals. In
the same way one obtains the well-known Fréchet distance 
\[
F\left(A,B\right)=\mathrm{E}\left[\frac{\left|A-B\right|}{1+\left|A-B\right|}\right].
\]

Below we present an incomplete list of transformations which, given
a p.q.-metric (quasimetric, pseudometric, conventional metric) $d$
on a space $H$ of jointly distributed random variables produces a
new p.q.-metric (respectively, quasimetric, pseudometric, or conventional
metric) on the same space. The proofs are trivial or well-known. The
arrows $\Longrightarrow$ should be read ``can be transformed into.'' 
\begin{enumerate}
\item $d\Longrightarrow d^{q}$ ($q<1$). In this way, for example, we can
obtain metrics 
\[
d^{\left(p,q\right)}\left(A,B\right)=\begin{cases}
\left(\mathrm{E}\left[\left|A-B\right|^{p}\right]\right)^{q/p} & \textnormal{for }1\leq p<\infty,\\
\left(\mathrm{ess}\sup\left|A-B\right|\right)^{q} & \textnormal{for }p=\infty
\end{cases}
\]
 from the metrics $d^{\left(p\right)}$ defined in (\ref{eq:p-dist}). 
\item $d\Longrightarrow d/\left(1+d\right)$, a standard way of creating
a bounded p.q.-metric. 
\item $d_{1},d_{2}\Longrightarrow\max\left\{ d_{1},d_{2}\right\} $ or $d_{1},d_{2}\Longrightarrow d_{1}+d_{2}$.
This transformations can be used to symmetrize p.q.-metrics, $d\left(A,B\right)+d\left(B,A\right)$
or $\max\left\{ d\left(A,B\right),d\left(B,A\right)\right\} $ (although
this is never useful when using chain inequalities as necessary conditions:
any violation of a chain inequality with the symmetrized quantities
implies a violation of this inequality by the original p.q.-metric,
but not vice versa). 
\item A generalization of the previous: $\left\{ d_{\upsilon}:\upsilon\in\Upsilon\right\} \Longrightarrow\sup\left\{ d_{\upsilon}\right\} $
and $\left\{ d_{\upsilon}:\upsilon\in\Upsilon\right\} \Longrightarrow\mathrm{E}\left[d_{U}\right]$,
where $\left\{ d_{\upsilon}:\upsilon\in\Upsilon\right\} $ is a family
of p.q.-metrics, and $U$ designates a random variable with a probability
measure $m$, so that 
\[
d\left(A,B\right)=\int_{\upsilon\in\Upsilon}d_{\upsilon}\left(A,B\right)\mathrm{d}m\left(\upsilon\right).
\]
 
\end{enumerate}
To illustrate the latter way of constructing p.q.-metrics, consider
a classification distance with binary partitions: the domain $V_{\omega}$
of every $H_{\omega}$ in $H$ is partitioned into two (measurable)
subsets, $W_{\omega,\upsilon}^{(1)}$ and $W_{\omega,\upsilon}^{(2)}$.
Making these partitions random, i.e., allowing the index $\upsilon$
to randomly vary in any way whatever, we get a new p.q.-metric. In
the special case when all random variables in $H$ take their values
in the set of real numbers, and $W_{\omega,\upsilon}^{(1)}$ is defined
by $z\leq\upsilon$ ($z\in V_{\omega}\subset\mathbb{R}$, $\upsilon\in$$\mathbb{R}$),
the randomization of the partitions reduces to that of the separation
point $\upsilon$. The p.q.-metric then becomes 
\[
d_{S}\left(A,B\right)=\Pr\left[A\leq U<B\right]
\]
 where $U$ is some random variable. An additively symmetrized (i.e.,
pseudometric) version of this p.q.-metric, $d_{S}\left(A,B\right)+d_{S}\left(B,A\right)$,
was introduced in Taylor (1984, 1985) under the name ``separation
(pseudo)metric,'' and shown to be a conventional metric if $U$ is
chosen stochastically independent of all random variables in $H$.

\end{document}